\title[$L_p$-DISCREPANCIES OF POINT DISTRIBUTIONS]{BOUNDS FOR $L_p$-DISCREPANCIES OF POINT DISTRIBUTIONS IN COMPACT METRIC MEASURE SPACES}
\author{M.M. SKRIGANOV}
\address{St. Petersburg Department, Steklov Mathematical Institute, Russian Academy of Sciences}
\email{maksim88138813@mail.ru}
\keywords{Discrepancies, Point distribution, Metric measure spaces.}
\subjclass[2000]{11K38, 52C99}
\numberwithin{equation}{section}
\newtheorem{theorem}{Theorem}[section]
\newtheorem{lemma}{Lemma}[section]
\newtheorem{proposition}{Proposition}[section]
\newtheorem{corollary}{Corollary}[section]
\theoremstyle{remark}
\theoremstyle{remark}
\def\dd{\mathrm{d}}
\def\Cc{\mathbb{C}}
\def\Ee{\mathbb{E}}
\def\Ff{\mathbb{F}}
\def\Hh{\mathbb{H}}
\def\Oo{\mathbb{O}}
\def\Rr{\mathbb{R}}
\def\DDD{\mathcal{D}}
\def\EEE{\mathcal{E}}
\def\III{\mathcal{I}}
\def\LLL{\mathcal{L}}
\def\MMM{\mathcal{M}}
\def\PPP{\mathcal{P}}
\def\QQQ{\mathcal{Q}}
\DeclareMathOperator{\diam}{diam}
\renewcommand{\le}{\leqslant}
\renewcommand{\ge}{\geqslant}
\begin{document}

\begin{abstract}

Upper bounds for the $L_p$-discrepancies of point distributions 
in compact metric measure spaces for $0<p\le\infty$ have been established
in the paper \cite{ref5*} 
by Brandolini, Chen, Colzani, Gigante and Travaglini.
In the present paper we show
that such bounds can be established in a much more general
situation under very simple 
conditions on the volume of metric balls as a function of radii.
\end{abstract}

\maketitle

\thispagestyle{empty}

%
%

\section{Introduction}\label{sec1}

Let $\MMM$ be a compact metric measure space with a fixed metric $\theta$ and a finite non-negative Borel measure~$\mu$, normalized, for convenience, by 
\begin{equation}
\mu(\MMM)=1, \quad \diam \MMM =1,
\label{eq1.1}
\end{equation}
where $\diam \EEE = \sup \{\theta(y_1,y_2), y_1,y_2 \in\EEE\}$ denotes 
the diameter of a set $\EEE\subseteq\MMM$.

Since $\MMM$ is connected and satisfies \eqref{eq1.1}, the set of values of $\theta$
coincides with the interval $\III=[0,1]$. We write $B(y,r)=\{x:\theta(x,y)<r\}$ 
for the ball in $\MMM$ of radius $r\in \III$ centered at~ $y\in\MMM$
and of volume $v(y,r)=\mu(B(y,r))$. We can conveniently write $B(y,r)=\emptyset$ and
$v(y,r)=0$ if $r\le 0$ and $B(y,r)=\MMM$ and $v(y,r)=1$ if $r\ge 1$.

The \textit{local discrepancy} of an $N$-point subset $\DDD_N \subset \MMM$ 
(distribution) in a metric ball $B(y,r)$ is defined by
\begin{align}
L[B(y,r),\DDD_N]
&
=\#(B(y,r)\cap \DDD_N)-N v(y,r))
\nonumber
\\
&
=\sum_{x\in\DDD_N}L(y,r,x),
\label{eq1.4}
\end{align}
where
\begin{equation}
L(y,r,x)=\chi(B(y,r),x)-v(y,r),
\label{eq1.5}
\end{equation}
and $\chi(\EEE,x)$ denotes the characteristic function of a subset $\EEE\subset\MMM$.

For $0<p<\infty$, the $L_p$-\textit{discrepancy} is defined by
\begin{equation}
\LLL_p[\xi,\DDD_N]=\LLL_p[\MMM,\xi,\DDD_N]=
\left(\int\!\!\!\!\int_{\MMM\times\III}\vert L[y,r,\DDD_N]\vert^p\,\dd\mu(y)\,
\dd\xi(r)\right)^{1/p},
\label{eq1.6}
\end{equation}
where $\xi$ is a finite (non-negative) measure on $\III$ normalized by $\xi(\III)=1$.

For $p=\infty$, we put
\begin{equation}
\LLL_{\infty}[\DDD_N]=\LLL_{\infty}[\MMM,\DDD_N]=\sup_{y,r}L[y,r,\DDD_N],
\label{eq1.7}
\end{equation}
where the supremum is taken over all balls $B(y,r)\subseteq\MMM$.

We introduce also the following extremal discrepancies
\begin{equation}
\left.
\begin{aligned}
\lambda_p[\xi,N]=\lambda_p[\MMM,\xi,N]=\inf_{\DDD_N}\LLL_p[\xi,\DDD_N],
\\  
\lambda_{\infty}[N]=\lambda_{\infty}[\MMM,N]=\inf_{\DDD_N}\LLL_{\infty}[\DDD_N],
\\
\end{aligned}
\quad \right\}
\label{eq1.8}
\end{equation}
where the infimum is taken over all $N$-point subsets $\DDD_N\subset\MMM$.

Point distributions on the spheres $S^d$ have been studied by many authors, see 
the surveys ~\cite{ref2, ref6} and references therein. We mention only
a few results known at present. First of all, we have the following
two-side bounds
\begin{equation}
N^{\frac12 - \frac{1}{2d}}\,\,\lesssim \,\,\lambda_2[S^d,\xi,N]\,\, 
\lesssim\,\, N^{\frac12 - \frac{1}{2d}},
\label{eq1.9*}
\end{equation}
where the measure 
$d\xi(r)=\frac{\pi}{2}\sin(\pi r)dr,\,\, r\in\III$ (for this measure
the geodesic balls $B(y,r)\subset S^d$ coincide with the spherical caps).
The upper bound in \eqref{eq1.9*} was proved by Alexander~\cite{ref1} 
and Stolarsky~\cite{ref13} and the lower bound by Beck ~\cite{ref3}, 
see also~\cite[Theorem~24A and Corollary~24C]{ref4}. Furthermore, 
Beck, see~\cite[Theorem~24D]{ref4}, established the bound
\begin{equation}
\lambda_{\infty}[S^d,N]\lesssim N^{\frac12-\frac{1}{2d}}\,(\log N)^{1/2}.
\label{eq1.12*}
\end{equation}
The constants implicit in the symbol $\lesssim$ are independent of $N$.

The two-side bounds \eqref{eq1.9*} were extended to all compact Riemannian symmetric manifolds of rank one (two-point homogeneous spaces)
and arbitrary absolutely continuous measures $\xi$ on $\III$, 
see~\cite[Theorem~2.2]{ref12}.  
Recall that these manifolds are the spheres $S^d$, the real, complex and quaternionic projective spaces $\Ff P^n, \Ff=\Rr, \Cc, \Hh$, and the octonionic projective 
plane $\Oo P^2$, see, for example, ~\cite{ref10}. Recall that any Riemannian manifold can be thought of as 
a mertric measure space with respect to the Riemannian distance and measure
(normalized, if needed by \eqref{eq1.1}). 

Notice that the upper bound in \eqref{eq1.9*}
holds for arbitrary compact $d$-rectifiable spaces  $\MMM$ and any measure $\xi$, 
see~\cite{ref11}. 
At the same time,  the lower bound in \eqref{eq1.9*} fails, even for 
the spheres $S^d$, if  the measure $\xi$ is singular. The corresponding example 
can be found in~\cite{ref5, ref11, ref12}. In this example the discrepancy
$\lambda_p[\xi,N]$ is bounded from above by a constant independent of $N$ and $p$.

The $L_p$-discrepancies for any $p$ and for general compact metric measure spaces 
were first estimated in the paper \cite{ref5*} 
by Brandolini, Chen, Colzani, Gigante and Travaglini. 
The authors consider point distributions in general sub-regions 
in $\MMM$ and estimate the $L_p$-discrepancies in the terms 
of regularity of the boundary of such sub-regions, see 
\cite[Theorems~ 8.1 and~8.5]{ref5*}. 

Let us discuss in more details the results of the paper \cite{ref5*} 
related to the $L_p$-discrepancies for the metric balls $B(y,r)\subset\MMM$.
Let $\MMM$ be a compact $d$-dimensional Riemannian manifold $\MMM$, 
and  $r_{\MMM}$
the injectivity radius of $\MMM$ (for the definition, see,
for example, \cite[p. 142]{ref10}).

It is proved in \cite[Corollary~8.4]{ref5*} that for any $N$ and 
each $0<p<\infty$ there exists
an $N$-point subset $\DDD_N \subset\MMM$ such that 
\begin{equation}
\LLL_p[\MMM,\xi,\DDD_N]\,\,\lesssim \,\,N^{\frac12 - \frac{1}{2d}},
\label{eq1.9**}
\end{equation}
provided that the measure $\xi$ is concentrated on the sub-interval 
$[0,r_{\MMM})\subseteq\III$.

Similarly, it is proved in \cite[Corollary~8.6]{ref5*} that for any $N$ there exists
an $N$-point subset $\DDD_N \subset\MMM$ such that 
\begin{equation}
\LLL^{0}_{\infty}[\MMM,\DDD_N]\,\,\lesssim \,\,N^{\frac12-\frac{1}{2d}}\,
(\log N)^{1/2},
\label{eq1.12**}
\end{equation}
where $\LLL^{0}_{\infty}[\MMM,N]$ is defined by \eqref{eq1.7} but the supremum is
taken over balls $B(y,r)\subset\MMM$ with  $0\le r<r_{\MMM}$.

Notice that for the spheres $S^d$, and for some other spaces, the injectivity radius 
is equal to the diameter, and the bound \eqref{eq1.12**} implies Beck's bound 
\eqref{eq1.12*}, while the bounds \eqref{eq1.9**} are new for $S^d$ and $2<p<\infty$.

Naturally, the question arises as to whether the parameter $r_{\MMM}$ is really
needed in the bounds \eqref{eq1.9**} and \eqref{eq1.12**}.

The occurrence of the injectivity radius in the above results has 
a pure geometric character and dictated by a local treatment of 
the discrepancies. For small radii $0\le r<r_{\MMM}$ 
the geodesic balls $B(y,r)$ are diffeomorphic to the balls in the Euclidean 
space $\Rr^d$,
while for large $r\ge r_{\MMM}$ the structure of balls $B(y,r)$ becomes
rather complicated. 

At the same time, the volume $v(y,r)$ as a function of radii
is quite regular for all $r\in\III$. Furthermore, $v(y,r)$ can be estimated
very precisely by the volume comparison theorems, well-known in the 
Riemannian geometry, see~\cite[Chapter~8]{ref9*} and \cite[Chapter~9]{ref10}. 
In the present paper we show that the volume function $v(y,r)$ suffices
to treat the discrepancies. This allows us to eliminate $r_{\MMM}$
from the bounds \eqref{eq1.9**} and \eqref{eq1.12**} and estimate  
the $L_p$-discrepancies in a much more general situation. 

We specialize metric measure spaces by the following two conditions.

\textit{Condition A.} The volume $v(y,r)$ satisfies the bounds
\begin{equation}
c_1^{-1}r^d\,\,\le v(y,r)\,\,\le c_1 r^d, \quad y\in \MMM, \,\, r\in \III,
\label{eq1.2}
\end{equation}
with positive constants $d$ and $c_1$ independent of $y\in \MMM$ and $r\in \III$

The spaces satisfying the Condition A are known as Ahlfors regular spaces. 

In the following, we write consecutively $c_1, c_2, c_3, \dots$ for positive constants
depending only on $\MMM$. 

\textit{Condition B.} The volume $v(y,r)$ as a function of $r$ is Lipschitz continuous:
\begin{equation}
\vert v(y,r_1)-v(y,r_2) \vert\,\, \le \,\,c_2 \vert r_1-r_2 \vert,\quad y\in\MMM, 
\,\, r_1,r_2 \in \III.
\label{eq1.3}
\end{equation}

It is not difficult to give many examples of compact spaces satisfying both 
Conditions A and B. Particularly, the following is true.
\begin{proposition}\label{prop1.1} 
Any compact $d$-dimensional Riemannian manifold satisfies the Conditions A and B.
\end{proposition}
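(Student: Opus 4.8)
The plan is to verify Conditions A and B for a compact $d$-dimensional Riemannian manifold $\MMM$ by analyzing the volume function $v(y,r)$ through the exponential map and the standard volume comparison theorems. The key observation is that because $\MMM$ is compact, both its sectional curvature and its geometry are uniformly controlled: there exist constants $\kappa_{-}\le 0\le \kappa_{+}$ bounding the sectional curvature from below and above, and the injectivity radius $r_{\MMM}>0$ is bounded away from zero. These uniform bounds are what allow all the estimates below to hold with constants independent of the center $y$.

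\emph{Verification of Condition A.} First I would treat the range of small radii $0\le r< r_{\MMM}$, where the exponential map $\exp_y$ is a diffeomorphism onto $B(y,r)$ and the volume can be written in geodesic polar coordinates as
\begin{equation}
v(y,r)=\int_{S^{d-1}}\int_0^r J(y,\rho,\omega)\,\dd\rho\,\dd\omega,
\label{eqP1}
\end{equation}
where $J$ is the Jacobian of $\exp_y$. The Bishop--Gromov comparison theorems bound $J(y,\rho,\omega)$ between the corresponding Jacobians of the model spaces of constant curvature $\kappa_{-}$ and $\kappa_{+}$; integrating these bounds and using the compactness of $\MMM$ to make the curvature bounds uniform yields $c^{-1}\rho^{d-1}\le J(y,\rho,\omega)\le c\,\rho^{d-1}$ for $\rho$ in the range where the model comparison stays positive, and hence $\tilde c^{-1}r^d\le v(y,r)\le \tilde c\,r^d$ on $[0,r_{\MMM})$. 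For the remaining range $r_{\MMM}\le r\le 1$, the upper bound is automatic since $v(y,r)\le 1\le (1/r_{\MMM})^d\,r^d$; for the lower bound I would use that $v(y,r)$ is nondecreasing in $r$, so $v(y,r)\ge v(y,r_{\MMM}/2)\ge c'>0$ with $c'$ uniform in $y$, and since $r\le 1$ this gives a lower bound of the required form $\gtrsim r^d$. Matching the two ranges produces a single constant $c_1$.

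\emph{Verification of Condition B.} The Lipschitz estimate \eqref{eq1.3} reduces to bounding the derivative $\partial_r v(y,r)$ uniformly. For $r<r_{\MMM}$ this derivative is exactly the surface area $\int_{S^{d-1}}J(y,r,\omega)\,\dd\omega$ of the geodesic sphere, which by the same comparison bounds is $\lesssim r^{d-1}\le 1$, uniformly in $y$. For $r\ge r_{\MMM}$ the sphere $\partial B(y,r)$ may develop singularities (it is no longer smooth because of the cut locus), so $v(y,r)$ need not be differentiable; here I would instead bound the difference $v(y,r_2)-v(y,r_1)$ directly as the measure of the annulus $\{x:r_1\le\theta(x,y)<r_2\}$, and estimate this measure by the $(d-1)$-dimensional area of the intervening geodesic spheres via the coarea formula, which again gives a bound $\le c_2|r_1-r_2|$ with $c_2$ uniform over $\MMM$ by compactness.

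\emph{Main obstacle.} The principal difficulty is the large-radius regime $r\ge r_{\MMM}$, where the exponential map is no longer a diffeomorphism and the geodesic spheres meet the cut locus. The clean polar-coordinate formula \eqref{eqP1} breaks down, so the Lipschitz bound in Condition B cannot be obtained by simply differentiating a smooth volume. The resolution is that even though $\partial B(y,r)$ is only a Lipschitz (in fact rectifiable) hypersurface for large $r$, its $(d-1)$-dimensional Hausdorff measure remains uniformly bounded: the cut locus has measure zero, and the area of each geodesic sphere stays controlled by the comparison geometry up to the first conjugate/cut radius. Making this rigorous requires the coarea formula together with a uniform upper bound on the area of geodesic spheres across all of $\MMM$, which compactness again guarantees. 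This is where the bulk of the technical work lies; the small-radius estimates and Condition A are comparatively routine consequences of Bishop--Gromov.
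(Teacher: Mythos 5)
Your treatment of Condition A is essentially the paper's: small radii via the exponential map and comparison estimates, then extension to all radii by monotonicity of $v(y,r)$ and compactness. For Condition B, however, you and the paper genuinely diverge, and the difference matters. The paper never splits into the regimes $r<r_{\MMM}$ and $r\ge r_{\MMM}$ and never touches geodesic spheres, the coarea formula, or the cut locus. Instead it uses the Bishop--Gromov theorem in its ratio form: for $k$ larger than a constant $k_{\MMM}$ determined by a Ricci lower bound (available by compactness), the ratio $v(y,r)/v_k(r)$ is non-increasing in $r$ \emph{for all} $r$, where
\begin{equation*}
v_k(r)=\sigma_d\int_0^r\left(\frac{\sinh ku}{k}\right)^{d-1}\dd u
\end{equation*}
is the ball volume in the hyperbolic model of curvature $-k^2$. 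Monotonicity of the ratio, plus $v(y,r)/v_k(r)\le 1$, gives for $r_1\le r_2$
\begin{equation*}
v(y,r_2)-v(y,r_1)\,\le\,\frac{v(y,r_1)}{v_k(r_1)}\bigl(v_k(r_2)-v_k(r_1)\bigr)\,\le\, v_k(r_2)-v_k(r_1)\,\le\, c_2\,(r_2-r_1),
\end{equation*}
since $v_k$ is smooth and increasing on the compact interval of relevant radii. The whole point of Bishop--Gromov is that this monotonicity is global in $r$, so the obstacle you identify as the ``main obstacle'' --- the breakdown of polar coordinates past the injectivity radius --- simply never arises. This is precisely the observation the paper is advertising: the volume function is regular for all radii even though the balls themselves are not.

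Your alternative route for $r\ge r_{\MMM}$ (coarea formula plus a uniform bound on $\mathcal{H}^{d-1}(\varSigma(y,\rho))$) can be completed, but the step you defer is not a consequence of compactness alone, as your phrase ``which compactness again guarantees'' suggests. The needed input is geometric: every point of $\varSigma(y,\rho)$ is joined to $y$ by a \emph{minimizing} geodesic, so $\varSigma(y,\rho)$ is contained in the image under $\exp_y$ of the part of the tangent sphere of radius $\rho$ lying within the cut domain, where Bishop's Jacobian upper bound $(\sinh(k\rho)/k)^{d-1}$ is still valid; integrating over directions gives the uniform area bound, and the coarea formula for the $1$-Lipschitz distance function then yields the Lipschitz estimate for the annulus measure. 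With that inserted your argument closes, but it costs a page of geometric measure theory that the paper's one-line ratio manipulation renders unnecessary.
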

The Condition A is well-known for compact Riemannian manifolds, see, for example,
\cite[Section~9.2]{ref9*}, while the Condition B is a little more specific, it can be 
easily derived from the Bishop--Gromov volume comparison theorem. For completeness,
we shall give a short proof of Proposition~\ref{prop1.1} in Appendix 
in Section~\ref{sec4}.

Our first result is the following. 
\begin{theorem}\label{thm1.1}
Let $\MMM$ be a compact connected metric measure space satisfying 
the Conditions A and B. Then for all  $N$,
we have 
\begin{equation}
\lambda_p[\MMM,\xi,N]\,\, \le\,\, c_3 (p+1)^{1/2}N^{\frac12 - \frac{1}{2d}},
\quad 0<p<\infty,
\label{eq1.9}
\end{equation}
where $\xi$ is an arbitrary normalized measure on $\III$.

Particularly, the bound \eqref{eq1.9} holds for any compact Riemannian 
manifold of dimension $d$.
\end{theorem}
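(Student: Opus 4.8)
The plan is to bound $\lambda_p$ from above by the expected $L_p$-discrepancy of a random point distribution, using that for any probability law on the $N$-point subsets $\DDD_N\subset\MMM$ one has $\lambda_p[\MMM,\xi,N]^p=\inf_{\DDD_N}\LLL_p[\xi,\DDD_N]^p\le\Ee\,\LLL_p[\xi,\DDD_N]^p$, since the infimum over all $N$-point sets does not exceed the average over the random ones. The law I would use is \emph{jittered (stratified) sampling}: first partition $\MMM$ into disjoint Borel cells $\Omega_1,\dots,\Omega_N$ with $\mu(\Omega_j)=1/N$ and $\diam\Omega_j\le c\,N^{-1/d}$ (here and below $c$ denotes a constant depending only on $\MMM$, possibly different at each occurrence), then choose $x_1,\dots,x_N$ independently with $x_j$ distributed according to the normalized restriction $N\,\mu|_{\Omega_j}$, and set $\DDD_N=\{x_1,\dots,x_N\}$.

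Fix $(y,r)$ and write $L[y,r,\DDD_N]=\sum_{j=1}^N\bigl(\chi(B(y,r),x_j)-v(y,r)\bigr)$. Because $\mu(\Omega_j)=1/N$ we have $\Ee\,\chi(B(y,r),x_j)=N\mu(B(y,r)\cap\Omega_j)$, and summing over $j$ gives $\sum_j\Ee\,\chi(B(y,r),x_j)=Nv(y,r)$; hence $\Ee\,L[y,r,\DDD_N]=0$, and $L[y,r,\DDD_N]=\sum_j\tilde X_j$ is a sum of independent mean-zero summands $\tilde X_j=\chi(B(y,r),x_j)-N\mu(B(y,r)\cap\Omega_j)$, each taking values in an interval of length one. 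The essential gain over pure i.i.d. sampling lies in the variance: $\tilde X_j$ is a nondegenerate random variable only when $\Omega_j$ meets both $B(y,r)$ and its complement, i.e. only when $\Omega_j$ \emph{straddles} the sphere $\{\theta(\cdot,y)=r\}$. Since $\diam\Omega_j\le cN^{-1/d}$, every straddling cell lies in the annulus $A=\{x:|\theta(x,y)-r|<cN^{-1/d}\}$, whose measure, by Condition~B, is at most $v(y,r+cN^{-1/d})-v(y,r-cN^{-1/d})\le 2c_2c\,N^{-1/d}$. As the cells are disjoint of measure $1/N$, the number of straddling cells is at most $N\mu(A)\le c\,N^{1-1/d}$.

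I would then invoke Hoeffding's lemma: a sum of independent mean-zero variables each supported in an interval of length one is sub-Gaussian with variance proxy equal to $\tfrac14$ times the number of nondegenerate summands, here $\le c\,N^{1-1/d}$. The sub-Gaussian moment bound therefore gives $\Ee\,|L[y,r,\DDD_N]|^p\le\bigl(c\sqrt{p}\,N^{\frac12-\frac1{2d}}\bigr)^p$ for every $p\ge1$, uniformly in $(y,r)$. Integrating over $\MMM\times\III$, which is a probability space by \eqref{eq1.1} and $\xi(\III)=1$, yields $\Ee\,\LLL_p[\xi,\DDD_N]^p\le\bigl(c\sqrt{p}\,N^{\frac12-\frac1{2d}}\bigr)^p$, and taking $p$-th roots proves $\lambda_p\le c\sqrt{p}\,N^{\frac12-\frac1{2d}}$ for $p\ge1$. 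For $0<p\le1$ the same construction gives, by the second moment alone, $\Ee\,|L|^p\le(\Ee\,|L|^2)^{p/2}\le(c\,N^{1-1/d})^{p/2}$, hence $\lambda_p\le c\,N^{\frac12-\frac1{2d}}$; since $(p+1)^{1/2}\ge1$ always and $(p+1)^{1/2}\ge\sqrt{p}$ for $p\ge1$, the two ranges combine into the asserted estimate \eqref{eq1.9} with a suitable $c_3$.

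The main obstacle is the very first step, constructing the equal-measure partition with $\diam\Omega_j\le cN^{-1/d}$. By Condition~A any set of measure $1/N$ has diameter $\gtrsim N^{-1/d}$, so this diameter bound is best possible and the construction must be tight. I expect to obtain the partition by recursive bisection: the connectedness of $\MMM$ together with the continuity of $s\mapsto v(y,s)$ (which, as noted in the introduction, makes the range of $\theta$ all of $\III$) lets one split any cell into two pieces of prescribed measures by an intermediate-value argument applied to the volume of a growing ball, while Condition~A keeps every diameter of order (measure)$^{1/d}$ throughout. Reconciling the exact measure constraint with the diameter bound at each stage of the recursion is the delicate point; once the partition is in hand, the remaining probabilistic steps are routine.
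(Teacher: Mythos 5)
Your proposal follows essentially the same route as the paper's proof: stratified (jittered) sampling over an equal-measure partition with cells of diameter $O(N^{-1/d})$; observing that the local discrepancy is then a sum of independent, mean-zero, bounded random variables which are nondegenerate only on the cells straddling the boundary of $B(y,r)$; counting those cells, via Condition B and the triangle inequality, by $O(N^{1-1/d})$; applying a moment inequality for such sums; integrating over $\MMM\times\III$; and handling $0<p<1$ by a trivial monotonicity/Jensen argument. There are two differences worth noting. First, where you invoke Hoeffding's lemma and sub-Gaussian moment bounds, the paper uses the Marcinkiewicz--Zygmund inequality $\Ee\,|\sum_j\zeta_j|^p\le 2^p(p+1)^{p/2}\,\Ee\,(\sum_j\zeta_j^2)^{p/2}$ together with $|\zeta_j|\le 1$; both yield the same $\sqrt{p}\,N^{\frac12-\frac1{2d}}$ growth, so this choice is immaterial. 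Second, and more importantly, the equal-measure, diameter-bounded partition that you flag as the ``main obstacle'' is exactly the theorem of Gigante and Leopardi on Ahlfors regular spaces, which the paper simply cites (its Lemma~2.1); it is a genuine theorem rather than a routine construction, and your recursive-bisection sketch would not easily be made rigorous — the intermediate-value argument produces prescribed measures, but keeping \emph{every} cell's diameter comparable to $(\text{measure})^{1/d}$ uniformly through all stages of the recursion in a general Ahlfors regular space is precisely the hard part. So, as written, that step is a gap in your argument; once you replace the sketch by the citation (and dispose of small $N$ by the trivial bound $\LLL_p\le N$, since the partition only exists for $N$ large), the rest of your proof is sound and matches the paper's.
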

The proof of Theorem~\ref{thm1.1} is given in Section~\ref{sec3}. In its proof, 
the well-known random $N$-point distributions will be used, 
see~\cite[pp. 237--239]{ref4}. Such random distributions are constructed in terms of partitions of the space $\MMM$ into $N$ subsets of equal measure and small diameters. The local discrepancies of such distributions can be written as sums of random independent variables, and the Marcinkiewicz--Zigmund inequality can be applied to obtain the bound \eqref{eq1.9}. These arguments 
in their background are similar to those in \cite{ref5*}. A new observation is that
the Conditions A and B are sufficient to prove the bound \eqref{eq1.9} without
any additional restrictions.

In  \eqref{eq1.9}, the dependence on the exponent $p$ is described explicitly. 
This allows us to obtain upper bounds for the extremal $L_\infty$-discrepancy.
For this purpose, we use the following \textit{a priory estimate}, which is also
of interest by itself.
\begin{proposition}\label{prop1.2} 
Let the assumptions of Theorem~\ref{thm1.1} hold. Then for an arbitrary $N$-point subset
$\DDD_N\subset\MMM$, we have
\begin{equation}
\LLL_{\infty}[\MMM,\DDD_N]\,\,\le\,\,2m^{2/p}\,\,\LLL_p[\MMM,\xi_0,\DDD_N]+c_4\,Nm^{-1/d},
\label{eq1.10}
\end{equation}
where $\xi_0$ is the standard Lebesgue measure on $\III$, while $p>1$ and integer 
$m\ge c_5$ are arbitrary parameters.
Particularly, we have
\begin{equation}
\lambda_{\infty}[\MMM,N]\,\,\le\,\,2m^{2/p}\,\,\lambda_p[\MMM,\xi_0,N]+c_4\,Nm^{-1/d}.
\label{eq1.11}
\end{equation}
\end{proposition}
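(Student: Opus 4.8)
The plan is to convert the pointwise largeness of the signed local discrepancy at a single extremal ball into largeness on a whole box in $(y,r)$-space, and then to bound the $L_p$-integral from below over that box. First I would fix a ball $B(y_0,r_0)$ with $L[y_0,r_0,\DDD_N]\ge\LLL_{\infty}[\MMM,\DDD_N]-\varepsilon$. Since $r\mapsto\#(B(y_0,r)\cap\DDD_N)$ is non-decreasing while, by Condition B, $r\mapsto v(y_0,r)$ is $c_2$-Lipschitz, for every $t\ge0$ one has
\begin{equation}
L[y_0,r_0+t,\DDD_N]\ge L[y_0,r_0,\DDD_N]-c_2Nt.
\nonumber
\end{equation}
Taking $t=1-r_0$ and using $L[y_0,1,\DDD_N]=0$ yields the a priori bound $1-r_0\ge(\LLL_{\infty}[\MMM,\DDD_N]-\varepsilon)/(c_2N)$, which will later guarantee that the box stays inside $\MMM\times\III$ precisely in the regime where the estimate is non-trivial.

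Next I would transfer this to nearby centres. For $y\in B(y_0,\delta)$ the triangle inequality gives the inclusions $B(y_0,r-\delta)\subseteq B(y,r)\subseteq B(y_0,r+\delta)$, so $\#(B(y,r)\cap\DDD_N)\ge\#(B(y_0,r-\delta)\cap\DDD_N)$, while Condition B gives $v(y,r)\le v(y_0,r+\delta)\le v(y_0,r-\delta)+2c_2\delta$. Combining these inclusions with the radial estimate above, I obtain for all $y\in B(y_0,\delta)$ and all $r\in[r_0+\delta,\,r_0+\delta+T]$ the uniform lower bound
\begin{equation}
L[y,r,\DDD_N]\ge\LLL_{\infty}[\MMM,\DDD_N]-\varepsilon-c_2N(T+2\delta).
\nonumber
\end{equation}
The essential point is that both perturbations are taken in the direction in which the counting function cannot drop, so that only the Lipschitz volume increments, controlled by $c_2$, enter the error.

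Now I would fix the two scales so as to match the exponents in \eqref{eq1.10}. Using the continuity of $v(y_0,\cdot)$ (Condition B) and the intermediate value theorem, I choose $\delta$ with $v(y_0,\delta)=1/m$; Condition A then forces $\delta\le(c_1/m)^{1/d}$, and I set $T=1/m$. With this choice the box $R=B(y_0,\delta)\times[r_0+\delta,r_0+\delta+T]$ has product measure $(\mu\times\xi_0)(R)=v(y_0,\delta)\,T=m^{-2}$, while the error satisfies $c_2N(T+2\delta)\le c_4Nm^{-1/d}$, since $m^{-1}\le m^{-1/d}$ for $d\ge1$ and $\delta\le(c_1/m)^{1/d}$. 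Requiring $m\ge c_5$ makes $\delta<1$, and the a priori bound on $1-r_0$ from the first step makes $\delta+T\le 1-r_0$ whenever $\LLL_{\infty}[\MMM,\DDD_N]\ge c_4Nm^{-1/d}$, so the box indeed lies in $\MMM\times\III$; in the complementary regime $\LLL_{\infty}[\MMM,\DDD_N]<c_4Nm^{-1/d}$ the bound \eqref{eq1.10} holds trivially.

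Finally I would integrate. Setting $A=\LLL_{\infty}[\MMM,\DDD_N]-c_4Nm^{-1/d}$ and treating the non-trivial case $A>0$, the previous steps give $L[y,r,\DDD_N]\ge A-\varepsilon>0$ throughout $R$, whence
\begin{equation}
\LLL_p[\MMM,\xi_0,\DDD_N]^p\ge\int\!\!\!\!\int_{R}|L[y,r,\DDD_N]|^p\,\dd\mu(y)\,\dd\xi_0(r)\ge m^{-2}(A-\varepsilon)^p.
\nonumber
\end{equation}
Taking $p$-th roots, letting $\varepsilon\to0$ and rearranging gives $\LLL_{\infty}[\MMM,\DDD_N]\le m^{2/p}\LLL_p[\MMM,\xi_0,\DDD_N]+c_4Nm^{-1/d}$, which implies \eqref{eq1.10} (the factor $2$ being a harmless slack). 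The extremal inequality \eqref{eq1.11} then follows by applying \eqref{eq1.10} to a near-minimiser of $\LLL_p[\MMM,\xi_0,\cdot]$ together with $\lambda_{\infty}[\MMM,N]\le\LLL_{\infty}[\MMM,\DDD_N]$. I expect the main obstacle to be exactly the jump discontinuity of the counting function: perturbing the radius downward or the centre arbitrarily could decrease the count by an integer and destroy the lower bound, so the whole construction must keep every perturbation one-sided in the count; the only genuine balancing act is then the coupled choice $\delta\sim m^{-1/d}$, $T\sim m^{-1}$ that simultaneously produces box-measure $m^{-2}$ and error $Nm^{-1/d}$.
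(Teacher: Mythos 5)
Your proof is correct, but it takes a genuinely different route from the paper's. The paper proves Proposition~\ref{prop1.2} by smoothing: it takes the equal-measure partition of Lemma~\ref{lem2.1} (the Gigante--Leopardi theorem), builds the product kernel \eqref{eq2.7} whose $L_q$-norm is exactly $m^{2/p}$ (see \eqref{eq2.9}), sandwiches $\chi(B(y,r),x)$ and $v(y,r)$ between kernel averages at radii $r\mp\varepsilon_m$ (Lemma~\ref{lem2.3}), and then applies H\"older's inequality to get the pointwise bound $|L[B(y,r),\DDD_N]|\le m^{2/p}\,\LLL_p[\xi_0,\DDD_N]$ plus Lipschitz error terms, uniformly over all balls, as in \eqref{eq2.19} and \eqref{eq2.17}. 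You instead localize: you pick a near-extremal ball and exploit the fact that enlarging the radius and moving the centre only in the direction that cannot decrease the point count leaves the signed discrepancy large, with only Lipschitz volume increments (Condition B) as error; integrating over the explicit box $B(y_0,\delta)\times[r_0+\delta,r_0+\delta+T]$ of product measure exactly $m^{-2}$ (via $v(y_0,\delta)=1/m$, $T=1/m$, using Condition A to get $\delta\le(c_1/m)^{1/d}$) then yields the bound, even with constant $1$ in place of $2$, and your a priori step $1-r_0\ge(\LLL_\infty-\varepsilon)/(c_2N)$ neatly disposes of the boundary issue. What your approach buys: it is elementary and self-contained, needing only Conditions A and B and not the partition existence theorem at all. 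What the paper's approach buys: it bounds $|L|$ pointwise at \emph{every} ball, so it is indifferent to the sign of the discrepancy. This matters for one caveat: your argument, as written, only treats large \emph{positive} discrepancy, which matches the literal definition \eqref{eq1.7} (a supremum of $L$, not of $|L|$); if, as is standard, $\LLL_\infty$ is meant to be $\sup|L|$, you must add the mirrored case --- for large negative $L[y_0,r_0]$, shrink the radius (so the count cannot increase), use $L[y_0,0]=0$ in place of $L[y_0,1]=0$ to get $r_0\ge(\LLL_\infty-\varepsilon)/(c_2N)$, and run the same box argument on $B(y_0,\delta)\times[r_0-\delta-T,r_0-\delta]$ --- a two-line addition. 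Finally, your step $m^{-1}\le m^{-1/d}$ silently uses $d\ge 1$; this is harmless, since a connected Ahlfors $d$-regular space with more than one point forces $d\ge1$ (and the paper's own proof of Lemma~\ref{lem2.3} implicitly relies on the same comparison).
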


The proof of Proposition~\ref{prop1.2} is given in Section~\ref{sec2}.

Comparing Theorem~\ref{thm1.1} with Propositions~\ref{prop1.2} and~1.1, we arrive at 
the following.
\begin{corollary}\label{cor1.1} 
Let the assumptions of Theorem~1.1 hold. Then for all  $N$,
we have
\begin{equation}
\lambda_{\infty}[\MMM,N]\,\,\le\,\, c_6\,N^{\frac12-\frac{1}{2d}}\,(\log N)^{1/2}.
\label{eq1.12}
\end{equation}
Particularly, the bound \eqref{eq1.12} holds for any compact Riemannian 
manifold of dimension $d$.
\end{corollary}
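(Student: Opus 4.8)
The plan is to derive \eqref{eq1.12} by combining the explicit $L_p$ bound of Theorem~\ref{thm1.1} with the a priori estimate \eqref{eq1.11} of Proposition~\ref{prop1.2}, and then optimizing over the two free parameters $p$ and $m$. Applying Theorem~\ref{thm1.1} with the particular choice $\xi=\xi_0$ (the Lebesgue measure is a normalized measure on $\III$), we have $\lambda_p[\MMM,\xi_0,N]\le c_3(p+1)^{1/2}N^{\frac12-\frac{1}{2d}}$. Substituting this into \eqref{eq1.11} gives, for every $p>1$ and every integer $m\ge c_5$,
\begin{equation*}
\lambda_{\infty}[\MMM,N]\,\,\le\,\, 2c_3\,m^{2/p}(p+1)^{1/2}N^{\frac12-\frac{1}{2d}}+c_4\,Nm^{-1/d}.
\end{equation*}
The entire task then reduces to selecting $p$ and $m$ so that both terms on the right are of order $N^{\frac12-\frac{1}{2d}}(\log N)^{1/2}$.

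First I would neutralize the factor $m^{2/p}$, which is the only place where the two parameters interact. Choosing $p=\log m$ (natural logarithm) gives $m^{2/p}=m^{2/\log m}=\ee^{2}$, a constant independent of $m$ and $N$; with this choice the first term becomes $2\ee^{2}c_3(\log m+1)^{1/2}N^{\frac12-\frac{1}{2d}}$. It then remains to fix $m$ so as to balance the second term $c_4Nm^{-1/d}$ against $N^{\frac12-\frac{1}{2d}}$. Taking $m=\lceil N^{(d+1)/2}\rceil$ yields $Nm^{-1/d}\le N^{\,1-\frac{1}{d}\cdot\frac{d+1}{2}}=N^{\frac12-\frac{1}{2d}}$, so the second term is at most $c_4N^{\frac12-\frac{1}{2d}}$. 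Simultaneously $\log m\le\frac{d+1}{2}\log N+O(1)$, whence $(\log m+1)^{1/2}\le c\,(\log N)^{1/2}$ for $N\ge2$, and the first term is bounded by a constant multiple of $N^{\frac12-\frac{1}{2d}}(\log N)^{1/2}$. Adding the two bounds produces \eqref{eq1.12} with a suitable $c_6$.

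The finitely many small values of $N$ — for which $\log N$ could vanish, or the constraints $m\ge c_5$ and $p=\log m>1$ might fail — are harmless, since $m=\lceil N^{(d+1)/2}\rceil\to\infty$, and they can be absorbed into the constant $c_6$. The argument is essentially a routine parameter optimization; the one point genuinely requiring care is the coupling $m^{2/p}$, which dictates the scaling $p\asymp\log m$ and is precisely the mechanism that generates the extra factor $(\log N)^{1/2}$ out of the $(p+1)^{1/2}$ appearing in Theorem~\ref{thm1.1}. Finally, since a compact $d$-dimensional Riemannian manifold satisfies Conditions~A and~B by Proposition~\ref{prop1.1}, the bound \eqref{eq1.12} holds for such manifolds as well.
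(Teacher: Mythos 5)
Your proposal is correct and takes essentially the same route as the paper: substitute the bound of Theorem~\ref{thm1.1} (with $\xi=\xi_0$) into \eqref{eq1.11}, choose $p\asymp\log N$ to make $m^{2/p}$ a constant (which converts $(p+1)^{1/2}$ into the $(\log N)^{1/2}$ factor), and choose $m$ polynomial in $N$ to control the term $c_4 N m^{-1/d}$. The paper's specific choices are $m=N^d$ (so that $Nm^{-1/d}=1$) and $p=2d\log N$, versus your $m=\lceil N^{(d+1)/2}\rceil$ and $p=\log m$; this difference is immaterial.
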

\begin{proof}
Putting $m=N^d$ in \eqref{eq1.11} and using \eqref{eq1.9}, we obtain
\begin{equation*}
\lambda_{\infty}[N]\,\,\le \,\,2N^{\frac{2d}{p}}\,\, \lambda_p[\xi_0,N] +c_4
\,\,\le\,\, 2c_3N^{\frac{2d}{p}}\,\, (p+1)^{1/2}N^{\frac12 - \frac{1}{2d}} +c_4 .
\end{equation*}
Now, we choose $p=2d\log N$ (with the log in base 2, say) to obtain 
\begin{equation*}
\lambda_{\infty}[N]\,\,\le \,\,2c_3N^{\frac12 - \frac{1}{2d}}\,\,(2d\log N +1)^{1/2} +c_4 
\,\,\le \,\,c_6 N^{\frac12-\frac{1}{2d}}\,\,(\log N)^{1/2},
\end{equation*}
%
%
that completes the proof.
\end{proof}

The present paper is organized as follows. In Section~\ref{sec2}  we describe the necessary facts 
on partitions of metric measure spaces and prove Proposition~\ref{prop1.2}. 
In Section~\ref{sec3} we describe the construction of random point distributions and prove Theorem~\ref{thm1.1}. Finally, in Section~\ref{sec4} we prove 
Proposition~\ref{prop1.1}. 


%
%

\section{Partitions of metric spaces. Proof of Proposition~1.2}\label{sec2}

The following general result is due to Gigante and Leopardi \cite[Theorem~2]{ref9}.
\begin{lemma}\label{lem2.1}
Let $\MMM$ be a compact connected metric measure space satisfying 
the Condition A. Then for all sufficiently large $m> c_7$
there exists a partition $P_m=\{\PPP_j\}_1^m$ of $\MMM$ into m subsets $\PPP_j$
with the following properties
\begin{equation}
\MMM=\bigcup_{1\le j\le m}\PPP_j,
\quad
\PPP_j\cap \PPP_i=0,
\quad
j\ne i,
\quad
\mu(\PPP_j)=m^{-1},
\quad
1\le j\le m
\label{eq2.1}
\end{equation}
and
\begin{equation}
c_8^{-1}\,\, m^{-1/d}\le\diam\PPP_j\le c_8\,\, m^{-1/d},
\quad
1\le j\le m
\label{eq2.2}
\end{equation}
\end{lemma}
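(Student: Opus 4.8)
The plan is to separate the two qualitatively different requirements in \eqref{eq2.2}. The \emph{lower} diameter bound is essentially free once the cells have measure exactly $m^{-1}$: if $x_0\in\PPP_j$ and $\delta_j=\diam\PPP_j$, then $\PPP_j\subseteq\{x:\theta(x,x_0)\le\delta_j\}=\bigcap_{r>\delta_j}B(x_0,r)$, so by Condition~A
\[
m^{-1}=\mu(\PPP_j)\le\inf_{r>\delta_j}v(x_0,r)\le c_1\,\delta_j^{\,d},
\]
which gives $\diam\PPP_j\ge c_1^{-1/d}m^{-1/d}$. Thus the real task is to construct a partition into exactly $m$ cells, each of measure exactly $m^{-1}$ and of diameter at most $c_8\,m^{-1/d}$; the lower bound in \eqref{eq2.2} then follows automatically. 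I also record for later use that Condition~A forces $\mu$ to have no atoms, since $\mu(\{y\})\le\inf_{r>0}v(y,r)\le\inf_{r>0}c_1r^{\,d}=0$.

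For the \emph{upper} bound I would first build a good rough partition from a net. Fix a scale $r=\kappa\,m^{-1/d}$ and take a maximal $r$-separated set $\{y_1,\dots,y_n\}$ in $\MMM$. Maximality forces the open balls $B(y_i,r)$ to cover $\MMM$, while separation makes the balls $B(y_i,r/2)$ pairwise disjoint; combining these two facts with Condition~A (by summing volumes) shows that $n$ is comparable to $m$. The associated Voronoi cells $V_i=\{x:\theta(x,y_i)\le\theta(x,y_k)\ \forall k\}$, with ties broken by index, then partition $\MMM$, and since the $B(y_i,r)$ cover $\MMM$ one has $\theta(x,y_i)=\min_k\theta(x,y_k)<r$ for $x\in V_i$, whence $V_i\subseteq B(y_i,r)$ and $\diam V_i\le 2r\lesssim m^{-1/d}$. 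At this stage each $\mu(V_i)$ is comparable to $m^{-1}$ but not equal to it, and $n\ne m$ in general.

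The heart of the argument is to pass from this rough partition to one with exactly $m$ cells of equal measure without destroying the diameter control. Here I would use that $\MMM$ is connected and $\mu$ is non-atomic, so that measure can be swept continuously: within a cell the function ``measure accumulated as a metric sphere is grown'' is monotone and (away from a countable set of radii, which can be avoided or absorbed) continuous, so intermediate-value cuts carve off pieces of any prescribed measure $\le m^{-1}$ while keeping diameters $\lesssim m^{-1/d}$. Organizing the rough cells into a hierarchical tree in which a cell is spatially close to its neighbours and to its parent, I would traverse the tree to obtain a linear ordering $V_{\sigma(1)},V_{\sigma(2)},\dots$ and then cut the resulting ``string'' of total mass $1$ into $m$ consecutive arcs of mass exactly $m^{-1}$. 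Since $n$ is comparable to $m$, each arc is the union of boundedly many consecutive cells together with two fractional boundary pieces; provided consecutive cells in the ordering are spatially close, every arc lies in a ball of radius $\lesssim m^{-1/d}$, giving the upper diameter bound.

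The main obstacle is precisely this last proviso: guaranteeing that the merges and cuts used to equalize the measures only ever combine spatially \emph{nearby} cells. A naive ordering, such as a depth-first traversal of the tree, keeps consecutive cells close within each subtree but can jump a bounded-but-nonzero distance when crossing between subtrees, and an arc straddling such a jump may have large diameter. Controlling these transitions, so that only $O(1)$ cells are merged across any cut and geometric locality is preserved throughout, is the delicate combinatorial--geometric core of the construction; it is exactly where the full strength of Ahlfors regularity enters, and it is the content of the theorem of Gigante and Leopardi \cite{ref9} on which Lemma~\ref{lem2.1} rests.
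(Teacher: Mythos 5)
The paper gives no proof of this lemma at all: it is quoted directly as Theorem~2 of Gigante and Leopardi \cite{ref9}, and your proposal, after handling the preliminaries, explicitly defers its core to that same theorem, so in effect you follow the paper's route. Your supplementary observations are correct and worthwhile: the lower bound in \eqref{eq2.2} is indeed automatic for any equal-measure partition, since $\PPP_j\subseteq\bigcap_{r>\diam\PPP_j}B(x_0,r)$ for $x_0\in\PPP_j$ gives $m^{-1}\le c_1(\diam\PPP_j)^d$ by Condition~A; non-atomicity of $\mu$ follows the same way; and the maximal-net/Voronoi construction does produce $\asymp m$ cells of measure $\asymp m^{-1}$ and diameter $\lesssim m^{-1/d}$. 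Just be clear that the step you leave open --- recombining and cutting these cells into \emph{exactly} $m$ pieces of measure \emph{exactly} $m^{-1}$ while every resulting piece stays inside a ball of radius $O(m^{-1/d})$ --- is not a routine detail to be filled in but is the entire substance of the cited theorem, so your sketch should be read as a reduction to \cite{ref9}, not as an independent proof of Lemma~\ref{lem2.1}.
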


Partitions with such properties occur in many fields of geometry and analysis.
For special spaces, such as the spheres $S^d$, they have long been in use,
see the references in \cite{ref9}.

We wish to give some simple corollaries of Lemma~\ref{lem2.1} needed for 
the proofs of Theorem~\ref{thm1.1} and Proposition~\ref{prop1.1}.
We write $\varSigma(y,r)=\{x:\theta(x,y)=r\}$ 
for the sphere in $\MMM$ of radius $r\in \III$ centered at~ $y\in\MMM$.
For a partition $P_m=\{\PPP_j\}_1^m$ of $\MMM$, we put
\begin{equation}
\left. 
\begin{aligned}
J_m=J_m(y,r)=\{j:\varSigma(y,r)\cup\PPP_j\ne\emptyset\},
\\
K_m=K_m(y,r)=\#\{J_m(y,r)\}.
\\
\end{aligned}
\quad \right\}
\label{eq2.3}
\end{equation}
Thus, $K_m$ is the number of subsets $\PPP_j\in P_m$ entirely covering 
the sphere $\varSigma(y,r)$.
\begin{lemma}\label{lem2.2}
Let $\MMM$ be a compact connected metric measure space satisfying 
the Conditions A and B and let $P_m=\{\PPP_j\}_1^m$ be the partition 
of $\MMM$ from Lemma~\ref{lem2.1}. Then, we have
\begin{equation}
K_m(y,r)\le c_9\, m^{1-\frac{1}{d}}.
\label{eq2.4}
\end{equation}
\end{lemma}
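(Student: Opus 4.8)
The goal is to bound the number $K_m(y,r)$ of partition cells that meet the sphere $\varSigma(y,r)$. The plan is to estimate the total measure of all such cells and then divide by the measure $m^{-1}$ of a single cell. The key geometric observation is that if a cell $\PPP_j$ meets $\varSigma(y,r)$, then by the diameter bound \eqref{eq2.2} every point of $\PPP_j$ lies within distance $c_8\,m^{-1/d}$ of the sphere. Hence the union of all cells indexed by $J_m(y,r)$ is contained in the annular shell
\begin{equation}
A(y,r)=\{x:\, |\theta(x,y)-r|\le c_8\,m^{-1/d}\}=B(y,r+\delta)\setminus B(y,r-\delta),\qquad \delta=c_8\,m^{-1/d}.
\label{eq:plan-shell}
\end{equation}

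The main step is to estimate the measure of this shell. Since the cells indexed by $J_m$ are disjoint and each has measure $m^{-1}$, I would write
\begin{equation}
K_m(y,r)\,m^{-1}=\sum_{j\in J_m}\mu(\PPP_j)=\mu\!\left(\bigcup_{j\in J_m}\PPP_j\right)\le \mu(A(y,r))=v(y,r+\delta)-v(y,r-\delta).
\label{eq:plan-key}
\end{equation}
Now Condition B (the Lipschitz bound \eqref{eq1.3}) controls the right-hand side directly: $v(y,r+\delta)-v(y,r-\delta)\le c_2\cdot 2\delta=2c_2 c_8\,m^{-1/d}$. Multiplying \eqref{eq:plan-key} through by $m$ then gives $K_m(y,r)\le 2c_2 c_8\,m^{1-1/d}$, which is \eqref{eq2.4} with $c_9=2c_2 c_8$.

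I expect the only delicate point to be the geometric inclusion justifying \eqref{eq:plan-shell}: one must check that $\PPP_j\cap\varSigma(y,r)\ne\emptyset$ forces $\PPP_j\subseteq A(y,r)$. This follows because if $z\in\PPP_j$ satisfies $\theta(z,y)=r$ and $x\in\PPP_j$ is arbitrary, then $\theta(x,z)\le\diam\PPP_j\le c_8\,m^{-1/d}$, so by the triangle inequality $|\theta(x,y)-r|=|\theta(x,y)-\theta(z,y)|\le\theta(x,z)\le c_8\,m^{-1/d}$. One should also note that the definition \eqref{eq2.3} as written has $\varSigma(y,r)\cup\PPP_j\ne\emptyset$, which is presumably a typo for $\varSigma(y,r)\cap\PPP_j\ne\emptyset$; the proof uses the intersection. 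It is worth remarking that Condition B is exactly what makes this argument work without any injectivity-radius restriction: the shell measure is controlled by the Lipschitz regularity of $v(y,\cdot)$ uniformly in $r\in\III$, so the estimate is valid for all radii, including the large ones where the balls $B(y,r)$ may be geometrically complicated.
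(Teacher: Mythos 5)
Your proposal is correct and follows essentially the same route as the paper: contain the union of cells meeting $\varSigma(y,r)$ in the annular shell of width $2c_8\,m^{-1/d}$, use disjointness and equal measure to write $K_m\,m^{-1}$ as the union's measure, and bound the shell's measure by Condition B. You even supply the triangle-inequality justification and note the $\cup$/$\cap$ typo in \eqref{eq2.3}, which the paper leaves implicit; the only (harmless) difference is your constant $2c_2c_8$ versus the paper's $4c_2c_8$.
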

\begin{proof}
Put $\tilde\varSigma (y,r)=\bigcup_{j\in J_m}\PPP_j$. In view of \eqref{eq2.1},
$\mu(\tilde\varSigma (y,r))=m^{-1} K_m$. From the other hand, in view of \eqref{eq2.2},
the union $\tilde\varSigma (y,r)$ is a subset 
in the spherical shell $B(y,r+c_8\,m^{-1/d})\setminus B(y,r-c_8\,m^{-1/d})$.
By the Condition B, we obtain 
\begin{equation*}
K_m\le m\left(v(y,r+c_8\,m^{-1/d}) - v(y,r-c_8\,m^{-1/d})\right)
\le 4 c_2 c_8\, m^{1-\frac{1}{d}},
\end{equation*}
that completes the proof.
\end{proof}

Introduce the following kernels
\begin{equation}
\delta^{\MMM}_m(y,z)=m\sum_{1\le j\le m} \chi(\PPP_j,y)\chi(\PPP_j,z)
\quad y,z\in\MMM,
\label{eq2.5}
\end{equation}
where $P_m=\{\PPP_j\}_1^m$ is an equal measure partition of $\MMM$,
see \eqref{eq2.1},
\begin{equation}
\delta^{\III}_m(r,u)=m\sum_{1\le i\le m} \chi(\QQQ_i,y)\chi(\QQQ_i,z)
\quad r,u\in\III,
\label{eq2.6}
\end{equation}
where $Q_m=\{\QQQ_i\}_1^m$ is the partition of $\III\setminus\{0\}$ into
the segments $\QQQ=(\frac{i-1}{m},\frac{i}{m}]$ of equal length $m^{-1}$.
We put
\begin{equation}
\delta_m(y,z;r,u)=\delta^{\MMM}_m(y,z)\,\, \delta^{\III}_m(r,u).
\label{eq2.7}
\end{equation}

The kernel \eqref{eq2.7} is non-negative and one can easily check the following 
relations
\begin{equation}
\int\!\!\!\!\int_{\MMM\times\III}\delta_m(y,z;r,u)\,\dd\mu(z)\,\dd u=1,
\label{eq2.8}
\end{equation}
\begin{equation}
\left(\int\!\!\!\!\int_{\MMM\times\III}\delta_m(y,z;r,u)^q\,\dd\mu(z)\,\dd u
\right)^{1/q}=
m^{2/p}, 
\label{eq2.9}
\end{equation}
where $1<q<\infty,\,\,1<p<\infty$ and $\frac{1}{q}+\frac{1}{p}=1$.

For the characteristic function and the volume of a ball $B(y,r)$, we consider
the following approximations (piece-wise on the partition $\PPP_m\times\III_m$)
\begin{equation}
\chi_m(B(y,r),x)=\int\!\!\!\!\int_{\MMM\times\III}\delta_m(y,z;r,u)\chi(B(z,u),x)
\,\dd\mu(z)\,\dd u,
\label{eq2.10}
\end{equation}
\begin{equation}
v_m(y,r)=\int\!\!\!\!\int_{\MMM\times\III}\delta_m(y,z;r,u)v(z,u)
\,\dd\mu(z)\,\dd u.
\label{eq2.11}
\end{equation}
\begin{lemma}\label{lem2.3}
Let the assumptions of Lemma~2.2 hold. Then, we have
\begin{equation}
\chi_m(B(y,r-\varepsilon_m),x)\le\chi(B(y,r),x)\le\chi_m(B(y,r+\varepsilon_m),x),
\label{eq2.12}
\end{equation}
\begin{equation}
v_m(y,r-\varepsilon_m)\le v(y,r)\le v_m(y,r+\varepsilon_m),
\label{eq2.13}
\end{equation}
where $\varepsilon_m = 2c_8 \, m^{-1/d}$.
\end{lemma}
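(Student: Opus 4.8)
The plan is to prove the pointwise inequality \eqref{eq2.12} first and then obtain the volume inequality \eqref{eq2.13} from it by a single integration. Indeed, $v(y,r)=\int_{\MMM}\chi(B(y,r),x)\,\dd\mu(x)$, and applying Fubini to the definition \eqref{eq2.11} together with \eqref{eq2.10} gives $v_m(y,s)=\int_{\MMM}\chi_m(B(y,s),x)\,\dd\mu(x)$; so integrating \eqref{eq2.12} against $\dd\mu(x)$ over $\MMM$ yields \eqref{eq2.13} at once. Thus all the content sits in \eqref{eq2.12}.

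To establish \eqref{eq2.12} I would first unwind the kernel. For fixed $y$ and $s$, let $\PPP=\PPP_j$ be the cell of $P_m$ containing $y$ and $\QQQ=\QQQ_i$ the half-open segment of length $m^{-1}$ containing $s$. By \eqref{eq2.5}--\eqref{eq2.7} the kernel $\delta_m(y,z;s,u)$ equals $m^2$ for $(z,u)\in\PPP\times\QQQ$ and vanishes otherwise, so \eqref{eq2.10} reduces to the average
\[
\chi_m(B(y,s),x)=m^2\!\int_{\PPP}\!\int_{\QQQ}\chi(B(z,u),x)\,\dd u\,\dd\mu(z),
\]
a number in $[0,1]$ equal to $1$ exactly when $x\in B(z,u)$ for a.e. $(z,u)\in\PPP\times\QQQ$ and equal to $0$ exactly when $x\notin B(z,u)$ for a.e. such $(z,u)$. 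The two inequalities then need checking only in their nontrivial cases: the right one when $x\in B(y,r)$ (the middle term is $1$, and I must force the average at $s=r+\varepsilon_m$ to be $1$) and the left one when $x\notin B(y,r)$ (the middle term is $0$, and I must force the average at $s=r-\varepsilon_m$ to be $0$).

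Both cases reduce to the triangle inequality and the diameter bound \eqref{eq2.2}. For any $z\in\PPP$ one has $\theta(y,z)\le\diam\PPP\le c_8 m^{-1/d}$, hence $|\theta(x,z)-\theta(x,y)|\le c_8 m^{-1/d}$. In the right case $\theta(x,y)<r$ gives $\theta(x,z)<r+c_8 m^{-1/d}$, while every $u$ in the segment containing $r+\varepsilon_m$ satisfies $u>r+\varepsilon_m-m^{-1}$; with $\varepsilon_m=2c_8 m^{-1/d}$ this forces $u>r+c_8 m^{-1/d}>\theta(x,z)$, so $x\in B(z,u)$ throughout $\PPP\times\QQQ$ and the average is $1$. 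The left case is symmetric: $\theta(x,y)\ge r$ gives $\theta(x,z)\ge r-c_8 m^{-1/d}$, while every $u$ in the segment containing $r-\varepsilon_m$ satisfies $u<r-\varepsilon_m+m^{-1}\le r-c_8 m^{-1/d}\le\theta(x,z)$, so $x\notin B(z,u)$ throughout and the average is $0$.

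The one delicate point—the step I expect to be the main obstacle—is the bookkeeping between the two independent length scales: the spatial cell diameter $c_8 m^{-1/d}$ and the radial segment width $m^{-1}$. The factor $2$ in $\varepsilon_m=2c_8 m^{-1/d}$ is chosen so that one copy of $c_8 m^{-1/d}$ absorbs the spatial displacement while the second copy dominates the segment width; this needs $m^{-1}\le c_8 m^{-1/d}$, valid for all sufficiently large $m$ since $d\ge1$ (possibly forcing one to enlarge the threshold $c_7$ of Lemma~\ref{lem2.1}). I would also note that the conventions $B(y,r)=\emptyset$ for $r\le0$ and $B(y,r)=\MMM$ for $r\ge1$ keep the argument valid when $r\pm\varepsilon_m$ leaves $\III$, and that Condition~B is not actually used here: \eqref{eq2.12}--\eqref{eq2.13} rest solely on the partition geometry of Lemma~\ref{lem2.1}, i.e. on Condition~A.
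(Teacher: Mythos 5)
Your proof is correct and takes essentially the same route as the paper: both identify the kernel's support (same cell $\PPP_j$, same segment $\QQQ_i$), apply the triangle inequality, and use $\varepsilon_m = 2c_8m^{-1/d}$ to absorb the cell diameter $c_8m^{-1/d}$ plus the segment width $m^{-1}$ (the paper, like you, implicitly needs $m^{-1}\le c_8 m^{-1/d}$ in its display $r^{\pm}$ estimates). The only cosmetic difference is that the paper proves the sandwich $\chi(B(y,r-\varepsilon_m),x)\le\chi_m(B(y,r),x)\le\chi(B(y,r+\varepsilon_m),x)$ at radius $r$ and then substitutes $r\mapsto r\pm\varepsilon_m$, whereas you verify the two inequalities of \eqref{eq2.12} directly by case analysis on $\chi(B(y,r),x)\in\{0,1\}$; your closing observations (Condition~B unused, edge cases at the ends of $\III$) are accurate refinements rather than a different argument.
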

\begin{proof}
By the triangle inequality, the ball $B(z,u)$ contains the ball $B(y,r^-)$
with $r^-=u-\theta(y,z)$ and is contained in the ball $B(y,r^+)$ with
$r^+=u+\theta(y,z)$.

From the definitions \eqref{eq2.5} and \eqref{eq2.6}, we conclude that the 
kernel \eqref{eq2.7} does not vanish, if and only if both centers 
$y$ and $z$ belong to the same subset $\PPP_j\in P_m$ and both radii
$r$ and $u$ belong to the same subset $\QQQ_i\in Q_m$. In such a situation, 
from \eqref{eq2.2} and the definition of the partition $P_m$, we obtain
\begin{equation*}
\begin{aligned}
r^-\ge r-c_8\, m^{-1/d}-m^{-1/d}\ge r-\varepsilon_m,
\\
r^+\le r+c_8\, m^{-1/d}+m^{-1/d}\le r+\varepsilon_m.
\\
\end{aligned}
\end{equation*}
Therefore, the ball $B(z,u)$ contains the ball $B(y,r-\varepsilon_m)$ and is
contained in the ball $B(y,r+\varepsilon_m)$. For the characteristic functions,
this means
\begin{equation*}
\chi(B(y,r-\varepsilon_m),x)\le\chi(B(z,u),x)\le\chi(B(y,r+\varepsilon_m),x).
\end{equation*}
Substituting these inequalities into \eqref{eq2.10} and using \eqref{eq2.8}, we obtain
\begin{equation*}
\chi(B(y,r-\varepsilon_m),x)
\le\chi_m(B(y,r),x)\le\chi(B(y,r+\varepsilon_m),x).
\end{equation*}
Replacing in these inequalities $r$ with $r-\varepsilon_m$ and next with 
$r+\varepsilon_m$, we obtain \eqref{eq2.12}. Integrating \eqref{eq2.12}
with respect to $x\in\MMM$, we obtain \eqref{eq2.13}.
\end{proof}

\begin{proof}[Proof of Proposition~\ref{prop1.2}]
Substituting \eqref{eq2.12} and \eqref{eq2.13} into \eqref{eq1.4}, we obtain 
\begin{equation}
\begin{aligned}
L_m[B(y,r-\varepsilon_m),\DDD_N]-N\alpha^-_m(y,r)
&\le L[B(e,r),\DDD_N]
\\
&\le L_m[B(y,r+\varepsilon_m),\DDD_N]-N\alpha^+_m(y,r),
\\
\end{aligned}
\label{eq2.14}
\end{equation}
where
\begin{equation}
\left.
\begin{aligned}
\alpha^-_m(y,r)= v(y,r)-v(y,r-\varepsilon_m)\ge 0,
\\
\alpha^+_m(y,r)= v(y,r+\varepsilon_m)-v(y,r)\ge 0
\\
\end{aligned}
\right\}
\label{eq2.15}
\end{equation}
and
\begin{equation}
\begin{aligned}
&L_m[B(y,r),\DDD_N]
=\sum_{x\in\DDD_N}\chi_m(B(y,r),x)-Nv_m(y,r)
\\
&=\int\!\!\!\!\int_{\MMM\times\III}\delta_m(y,z;r,u)L[(B(z,u),\DDD_N].
\,\dd\mu(z)\,\dd u,
\end{aligned}
\label{eq2.16}
\end{equation}
From \eqref{eq2.14}, we obtain the bound
\begin{equation}
\begin{aligned}
|L[B(y,r),\DDD_N]|&\le
|L_m[B(y,r-\varepsilon_m),\DDD_N]|+|L_m[B(y,r+\varepsilon_m),\DDD_N]|
\\
&
+N\alpha^-_m(y,r)+N\alpha^+_m(y,r).
\end{aligned}
\label{eq2.17}
\end{equation}
The quantities \eqref{eq2.15} can be easily estimated by the Condition B
\begin{equation}
\alpha^-_m(y,r)\le 2c_2\,c_8\, m^{-1/d},
\quad \alpha^+_m(y,r)\le 2c_2\,c_8\, m^{-1/d}.
\label{eq2.18}
\end{equation}
Applying H\"older's inequality to the integral \eqref{eq2.16}
and using \eqref{eq2.9}, we obtain 
\begin{equation}
|L[B(y,r),\DDD_N]|\le m^{2/p}\,\,\LLL_p[\xi_0,\DDD_N],
\label{eq2.19}
\end{equation}
where $\xi_0$ is the standard Lebesgue measure on the interval $\III$.

Notice that the right hand sides in \eqref{eq2.18} and \eqref{eq2.19} are 
independent of $y$ and $r$. Substituting \eqref{eq2.18} and \eqref{eq2.19}
into \eqref{eq2.17} and using the definition of $L_{\infty}$-discrepancy
\eqref{eq1.7}, we obtain 
\begin{equation}
\LLL_{\infty}[\DDD_N]\le 2m^{2/p}\,\,\LLL_p[\xi_0,\DDD_N]+4c_2\,c_8\,N m^{-1/d}.
\label{eq2.20}
\end{equation}
This proves the bound \eqref{eq1.10} with $c_4=4c_2c_8$
and $c_5=c_7$. 
\end{proof}

%
%

%
%

\section{Random point distributions. Proof of Theorem~1.1}\label{sec3}

Random $N$-point distributions can be constructed as follows.
Suppose that a partition $\PPP_N=\{\PPP_j\}^N_1$ of the space $\MMM$ 
into $N$ parts $\PPP_j\subset\MMM$ of equal measure $N^{-1}$ is given. 
Introduce the probability space
\begin{equation}
\Omega_N
=\prod_{1\le j\le N}\PPP_j
=\{X_N=(x_1,\ldots,x_N):x_j\in \PPP_i,1\le i\le N\},
\label{eq3.1}
\end{equation}
with a probability measure
%
%
$\omega_N=\prod_{1\le j\le N} \tilde\mu_j$,
%
%
where $\tilde\mu_j=N\mu\vert_{\PPP_j}$,
and $\mu\vert_{\PPP_j}$ denotes the restriction of the measure $\mu$ to a subset 
$\PPP_j\subset\MMM$.
We write $\Ee F[\,\cdot\,]$ for the expectation of a random variable $F[X_N]$, 
$X_N\in\Omega_N$:
\begin{align}
\Ee F[\,\cdot\,]
&
=\int_{\Omega_N}F[X_N]\,\dd\omega_N
\nonumber
\\
&
=N^N\int\!\ldots\!\int_{\PPP_1\times\ldots\times \PPP_N}F(x_1,\ldots,x_N)\,\dd\mu(x_1)\ldots\dd\mu(x_N).
\label{eq3.2}
\end{align}
Particularly, if $F[X_N]=f(x_j)$, where $j$ is a fixed index and $f(x),\, x\in\MMM,$
is a summable function, then
\begin{equation}
\Ee F[\,.\,] = N\int_{\PPP_j} f(x)\,\dd\mu(x).
\label{eq3.3}
\end{equation}

Elements $X_N=(x_1,\ldots,x_N)\in\Omega_N$ can be thought of as random $N$-point distributions in the space~$\MMM$, and their discrepancies $\LLL_p[\xi,X_N]$
as random variables on the probability space~$\Omega_N$. We shall prove the following 
\begin{lemma}\label{lem3.1}
Let $\MMM$ be a compact connected metric measure space satisfying 
the Conditions A and B, and let the probability space~$\Omega_N$ in \eqref{eq3.1}
be constructed by the partition $P_N=\{\PPP_j\}_1^N$
of $\MMM$ from Lemma~\ref{lem2.1} with $m=N$. Then, we have
\begin{equation}
\left(\,\Ee\, |\,\LLL_p[\xi,.]\,|^p\,\right)^{1/p} \le c_{10}\, (p+1)^{1/2}\,N^{\frac12 - \frac{1}{2d}},
\quad 0<p<\infty,
\label{eq3.4}
\end{equation}
where $\xi$ is an arbitrary normalized measure on $\III$.
\end{lemma}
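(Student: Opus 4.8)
The plan is to fix a pair $(y,r)\in\MMM\times\III$ and exhibit the local discrepancy of the random distribution as a sum of independent, mean-zero, bounded random variables, to which the Marcinkiewicz--Zygmund inequality applies. Writing $X_N=(x_1,\ldots,x_N)$ with $x_j\in\PPP_j$ distributed according to $\tilde\mu_j=N\mu\vert_{\PPP_j}$, I would set
\begin{equation*}
\zeta_j(y,r)=\chi(B(y,r),x_j)-N\mu(B(y,r)\cap\PPP_j),\qquad 1\le j\le N.
\end{equation*}
By \eqref{eq3.3} each $\zeta_j$ has zero expectation; the $\zeta_j$ are independent since they depend on distinct coordinates $x_j$; and $|\zeta_j|\le 1$, because $\chi(B(y,r),x_j)\in\{0,1\}$ and $N\mu(B(y,r)\cap\PPP_j)\in[0,1]$. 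Summing over $j$ and using $\sum_j N\mu(B(y,r)\cap\PPP_j)=N\mu(B(y,r))=Nv(y,r)$, the centering term cancels and \eqref{eq1.4} gives $L[B(y,r),X_N]=\sum_{j=1}^N\zeta_j(y,r)$.

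The key step is a \emph{deterministic} bound on the number of nonvanishing terms. If $\PPP_j$ lies entirely inside $B(y,r)$, then $\zeta_j\equiv0$, since both terms equal $1$ after the normalization $N\mu(\PPP_j)=1$; and if $\PPP_j$ is disjoint from $B(y,r)$, then again $\zeta_j\equiv0$. Hence $\zeta_j$ can be nonzero only for the \emph{straddling} parts, those meeting both $B(y,r)$ and its complement. By the diameter bound \eqref{eq2.2} of Lemma~\ref{lem2.1}, every straddling $\PPP_j$ is contained in the spherical shell $B(y,r+c_8 N^{-1/d})\setminus B(y,r-c_8 N^{-1/d})$; since these parts are pairwise disjoint of measure $N^{-1}$, exactly the measure computation in the proof of Lemma~\ref{lem2.2} (i.e. Condition~B) bounds their number by $c\,N^{1-1/d}$. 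Consequently, for \emph{every} realization $X_N$,
\begin{equation*}
\sum_{j=1}^N\zeta_j(y,r)^2\le\#\{j:\PPP_j\text{ straddles }B(y,r)\}\le c\,N^{1-\frac1d}.
\end{equation*}

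Finally I would pass to $L_p$-norms. For $p\ge2$ the Marcinkiewicz--Zygmund inequality yields $\Ee|\sum_j\zeta_j|^p\le B_p\,\Ee(\sum_j\zeta_j^2)^{p/2}$ with $B_p^{1/p}\le C\sqrt p$, so, combined with the displayed bound,
\begin{equation*}
\bigl(\Ee\,|L[B(y,r),X_N]|^p\bigr)^{1/p}\le C\sqrt p\,(c\,N^{1-1/d})^{1/2}\le c'\,(p+1)^{1/2}N^{\frac12-\frac1{2d}}
\end{equation*}
uniformly in $(y,r)$; for $0<p\le2$ the same uniform bound follows from the monotonicity of $L^p$-norms on a probability space, $\|\,\cdot\,\|_p\le\|\,\cdot\,\|_2$, together with the case $p=2$, where $\Ee|\sum_j\zeta_j|^2=\sum_j\Ee\zeta_j^2\le c\,N^{1-1/d}$. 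To conclude, raise \eqref{eq1.6} to the $p$-th power, interchange expectation and integration by Tonelli, and use the uniform pointwise bound with $\mu(\MMM)=\xi(\III)=1$:
\begin{equation*}
\Ee\,\LLL_p[\xi,\cdot]^p=\int\!\!\!\!\int_{\MMM\times\III}\Ee\,|L[y,r,X_N]|^p\,\dd\mu(y)\,\dd\xi(r)\le\bigl(c_{10}(p+1)^{1/2}N^{\frac12-\frac1{2d}}\bigr)^p,
\end{equation*}
which is \eqref{eq3.4}. The main obstacle is the probabilistic estimate: one must invoke the precise $O(\sqrt p)$ growth of the Marcinkiewicz--Zygmund constant to produce the explicit factor $(p+1)^{1/2}$, and split the ranges $0<p\le2$ and $p>2$. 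The geometric input — that only $O(N^{1-1/d})$ parts can straddle any ball — is precisely where Conditions~A and~B (via Lemmas~\ref{lem2.1} and~\ref{lem2.2}) enter.
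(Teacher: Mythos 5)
Your proof is correct and follows essentially the same route as the paper: write the local discrepancy of the random distribution as a sum of independent, mean-zero, bounded random variables indexed by the boundary parts, bound the number of such parts by $O(N^{1-\frac1d})$ using the diameter bound of Lemma~\ref{lem2.1} together with Condition~B (the content of Lemma~\ref{lem2.2}), apply the Marcinkiewicz--Zygmund inequality with its $O(\sqrt{p})$ constant, and integrate over $\MMM\times\III$. The only cosmetic differences are that you identify the nonvanishing terms as the parts straddling $B(y,r)$ and its complement, whereas the paper indexes them by parts meeting the sphere $\varSigma(y,r)$ (your variant is, if anything, slightly more robust), and you split the range at $p=2$ rather than $p=1$ before invoking monotonicity of $L_p$-norms.
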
 

Theorem~1.1 is a direct corollary of Lemma~3.1.

\begin{proof}[Proof of Theorem~\ref{thm1.1}]
It follows from \eqref{eq3.4} that for each $0<p<\infty$ there exists an $N$-point
subset $X^{(p)}_N\in\Omega_N$ such that
\begin{equation*}
\LLL_p[\xi,X^{(p)}_N] \le c_{10}\, (p+1)^{1/2}\,N^{\frac12 - \frac{1}{2d}},
\end{equation*}
and the bound \eqref{eq1.9} follows for $N>c_7$ with $c_3 = c_{10}$, while for  
$N\le c_7$, we  have $\lambda_p[\xi,N]\le 2c_7$. This proves Theorem~1.1.
\end{proof} 

For the proof of Lemma~\ref{lem3.1}, we need the Marcinkiewicz--Zigmund inequality,
which can be stated as follows.
\begin{lemma}\label{lem3.2}
Let $\zeta_j,\, j\in J, \#\{J\}<\infty,$ be a finite collection of real-valued independent random variables on a probability space $\Omega$ with expectations 
$\Ee\,\zeta_j =0, j\in J$. Then, we have 
\begin{equation}
\Ee\, |\,\sum_{j\in J}\zeta_j\,|^p\le 2^p\, (p+1)^{p/2}\,\Ee\,
(\,\sum_{j\in J}\zeta^2_j \,)^{p/2},
\quad 1\le p<\infty.
\label{eq3.5}
\end{equation}
\end{lemma}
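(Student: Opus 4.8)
The plan is to derive the inequality by the classical route of \emph{symmetrization} followed by Khintchine's inequality, keeping track of constants so that the final factor comes out to be exactly $2^p(p+1)^{p/2}$. First I would introduce an independent copy $\zeta_j'$ of each $\zeta_j$ (on a product probability space), so that $\Ee\,\zeta_j'=0$ and each $\zeta_j-\zeta_j'$ is symmetric about $0$. Since $t\mapsto|t|^p$ is convex for $p\ge1$, Jensen's inequality applied to the inner expectation over the copies gives
\[
\Ee\Bigl|\sum_{j\in J}\zeta_j\Bigr|^p
=\Ee\Bigl|\sum_{j\in J}\bigl(\zeta_j-\Ee\,\zeta_j'\bigr)\Bigr|^p
\le\Ee\Bigl|\sum_{j\in J}(\zeta_j-\zeta_j')\Bigr|^p,
\]
which costs no constant and reduces matters to a symmetric sum.

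Next I would exploit the symmetry: for independent Rademacher signs $\varepsilon_j$ (independent of everything else), the family $(\varepsilon_j(\zeta_j-\zeta_j'))_j$ has the same joint distribution as $(\zeta_j-\zeta_j')_j$, so averaging over the signs changes nothing, and
\[
\Ee\Bigl|\sum_{j\in J}(\zeta_j-\zeta_j')\Bigr|^p
=\Ee\,\Ee_\varepsilon\Bigl|\sum_{j\in J}\varepsilon_j(\zeta_j-\zeta_j')\Bigr|^p.
\]
Conditioning on the $\zeta_j,\zeta_j'$ and applying Khintchine's inequality in the form $\Ee_\varepsilon|\sum_j\varepsilon_j a_j|^p\le B_p^p(\sum_j a_j^2)^{p/2}$ with coefficients $a_j=\zeta_j-\zeta_j'$ bounds the inner expectation by $B_p^p\bigl(\sum_j(\zeta_j-\zeta_j')^2\bigr)^{p/2}$. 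Then, using $(\zeta_j-\zeta_j')^2\le2\zeta_j^2+2\zeta_j'^2$ together with the convexity (for $p\ge2$) or subadditivity (for $1\le p<2$) of $t\mapsto t^{p/2}$, and the fact that $\zeta$ and $\zeta'$ are identically distributed, I would desymmetrize the sum of squares to get $\Ee\bigl(\sum_j(\zeta_j-\zeta_j')^2\bigr)^{p/2}\le2^p\,\Ee\bigl(\sum_j\zeta_j^2\bigr)^{p/2}$. Altogether the accumulated constant is $2^p B_p^p$.

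The hard part will be pinning down the Khintchine constant so that $B_p\le(p+1)^{1/2}$. For $1\le p\le2$ this is immediate, since $\|\cdot\|_p\le\|\cdot\|_2$ forces $B_p\le1$. For $p\ge2$ I would first treat even integers $p=2k$: expanding $\Ee_\varepsilon(\sum_j\varepsilon_j a_j)^{2k}$, only monomials in which every exponent is even survive, and a term-by-term comparison with the multinomial expansion of $(\sum_j a_j^2)^k$—which for each block reduces to the elementary inequality $(2m)!\ge2^m\,m!$—yields $\Ee_\varepsilon(\sum_j\varepsilon_j a_j)^{2k}\le(2k-1)!!\,(\sum_j a_j^2)^k$. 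Since $(2k-1)!!\le(2k-1)^k$, taking $2k$-th roots gives the bound $\sqrt{2k-1}\,(\sum_j a_j^2)^{1/2}$ for the $L_{2k}$-norm. For general $p\ge2$ I would choose the even integer $2k=2\lceil p/2\rceil\ge p$ and invoke monotonicity of the $L_p$-norms, $\|\sum_j\varepsilon_j a_j\|_p\le\|\sum_j\varepsilon_j a_j\|_{2k}\le\sqrt{2k-1}\,(\sum_j a_j^2)^{1/2}$; since $2k-1\le p+1$ this yields $B_p\le\sqrt{p+1}$ throughout. Combining with the previous paragraph gives $2^p B_p^p\le2^p(p+1)^{p/2}$, which is the asserted bound. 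The only genuinely delicate point is the combinatorial coefficient comparison in the even-moment estimate; the remaining steps are routine bookkeeping of constants.
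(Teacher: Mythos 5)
Your proof is correct, but it takes a genuinely different route from the paper for the simple reason that the paper contains no proof of this lemma at all: the Marcinkiewicz--Zygmund inequality with this explicit constant is quoted directly from \cite[Section~10.3, Theorem~2]{ref7}. Your argument --- symmetrization by an independent copy, randomization by Rademacher signs, a conditional Khintchine inequality with the Gaussian-type even-moment bound $(2k-1)!!$, and de-symmetrization of the square function --- is the classical self-contained route. Its merit here is that it makes the $p$-dependence of the constant fully explicit, and that explicit dependence $(p+1)^{p/2}$ is precisely what the paper exploits later when choosing $p\asymp\log N$ in the proof of Corollary~\ref{cor1.1}; the citation approach buys brevity, yours buys self-containedness. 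The delicate steps all check out: the even-moment comparison does reduce to $(2m)!\ge 2^m m!$, and the bounds $(2k-1)!!\le(2k-1)^k$ and $2\lceil p/2\rceil-1\le p+1$ are valid.

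One accounting slip, which does not affect the conclusion: for $1\le p<2$, de-symmetrizing via $(\zeta_j-\zeta_j')^2\le 2\zeta_j^2+2\zeta_j'^2$ and subadditivity of $t\mapsto t^{p/2}$ yields the factor $2^{p/2}\cdot 2=2^{1+p/2}$, not $2^p$ (at $p=1$ this is $2^{3/2}>2$). The lemma still follows on this range because there $B_p\le 1$ and $2^{1+p/2}\le 2^p(p+1)^{p/2}$ for all $p\ge 1$, with equality at $p=1$. If you want the clean factor $2^p$ for every $p\ge 1$, replace the coordinatewise bound by the pointwise Euclidean triangle inequality $\bigl(\sum_j(\zeta_j-\zeta_j')^2\bigr)^{1/2}\le\bigl(\sum_j\zeta_j^2\bigr)^{1/2}+\bigl(\sum_j\zeta_j'^2\bigr)^{1/2}$ and then use convexity of $t\mapsto t^p$.
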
 
The proof of Lemma~\ref{lem3.2} can be found in~\cite[Section~10.3,Theorem~2]{ref7}. 
\begin{proof}[Proof of Lemma~\ref{lem3.1}]
Introduce the notation
\begin{equation}
\left. 
\begin{aligned}
J^0_m=J^0_m(y,r)=\{j:\PPP_j\subset B(y,r)\},
\\
K^0_m=K^0_m(y,r)=\#\{J^0_m(y,r)\}.
\\
\end{aligned}
\quad \right\}
\label{eq3.6}
\end{equation}
In the notation \eqref{eq2.3} and \eqref{eq3.6} the characteristic function and volume
of a ball can be written as 
\begin{equation*}
\begin{aligned}
\chi(B(y,r),x)&=\sum_{j\in J^0_N}\chi(\PPP_j,x)+\sum_{j\in J_N}\chi(B(y,r)\cap\PPP_j,x),
\\
v(y,r)&=N^{-1}K^0_N+\sum_{j\in J_N}\mu(B(y,r)\cap\PPP_j).
\\
\end{aligned}
\end{equation*}
With the help of these formulas we can calculate the local discrepancy \eqref{eq1.4}
for the random point distribution $X_N=(x_1,\ldots,x_N)\in\Omega_N$:
\begin{equation*}
\begin{aligned}
&L[B(y,r),X_N]=\#(B(y,r)\cap X_N)-N v(y,r))
\\
&=K^0_N+\sum_{j\in J_N}\chi(B(y,r)\cap\PPP_j,x_j)-K^0_N-N\sum_{j\in J_N}
\mu(B(y,r)\cap\PPP_j)
\\
&=\sum_{j\in J_N}\chi(B(y,r)\cap\PPP_j,x_j)-N\sum_{j\in J_N}\mu(B(y,r)\cap\PPP_j),
\\
\end{aligned}
\end{equation*}
and we can write
\begin{equation}
L[B(y,r),X_N]=\sum_{j\in J_N}\zeta[X_N],
\label{eq3.7}
\end{equation}
where
\begin{equation}
\zeta_j [X_N]=\zeta_j [y,r,X_N]=\chi(B(y,r)\cap\PPP_j,x_j) -N\mu(B(y,r)\cap\PPP_j)
\label{eq3.8}
\end{equation}
are random variables on the probability space $\Omega_N$.

The random variables \eqref{eq3.8} are independent, $|\zeta_j[X_N]|<1$ and, in view of
\eqref{eq3.3}, their expectations $\Ee\zeta_j[\,.\,]=0,\, j\in J_N$. Hence, 
the Marcinkiewicz--Zigmund inequality \eqref{eq3.5} can be applied to the sum
\eqref{eq3.7}, and taking the bound \eqref{eq2.4} into account, we obtain
\begin{equation}
\begin{aligned}
\Ee\, |\,\sum_{j\in J_N}\zeta_j [\,.\,]\,|^p &\le 2^p (p+1)^{1/2} K_N ^{p/2}
\\
&\le 2^p\, (p+1)^{1/2}\,\, c^{p/2}_9\,\, N^{(\frac{1}{2} - \frac{1}{2d})p}, 
\quad 1\le p<\infty.
\\
\end{aligned}
\label{eq3.9}
\end{equation}
Notice that the right hand side in \eqref{eq3.9} is independent of $y$ and $r$.
Integrating the inequality \eqref{eq3.9} with respect to the measure 
$\mu\times\xi$ on $\MMM\times\III$, we obtain
\begin{equation}
\begin{aligned}
&\int\!\!\!\!\int_{\MMM\times\III}\Ee\, |\,\sum_{j\in J_N}\zeta_j [y,r,\,.\,]\,|^p
\,\dd\mu(y)\,\dd u
\\
&=\Ee (\LLL_p[\xi,.])^p\,
\le \,2^p\, (p+1)^{1/2}\,\, c^{p/2}_9\,\, N^{(\frac{1}{2} - \frac{1}{2d})p}, 
\quad 1\le p<\infty.
\\
\end{aligned}
\label{eq3.10}
\end{equation}
This proves the bound \eqref{eq3.4} for $1\le p<\infty$. Since the left hand side
in \eqref{eq3.4} is a non-decreasing function of $p$, the bound \eqref{eq3.4}
holds for all $0\le p<\infty$.
\end{proof}

%
%

%
%

\section{Appendix: Proof of Proposition~1.1}\label{sec4}
In this Section we consider a compact $d$-dimensional Riemanniam manifold $\MMM$
with the standard Riemannian geodesic distance $\theta$ and measure $\mu$ defined
by the corresponding metric tensor on $\MMM$, see, \cite{ref10}. Notice that for such  $\theta$ and $\mu$ the normalization \eqref{eq1.1} fails but this is of no importance for the present discussion, because the choose of normalization has effect only on the constants in the bounds \eqref{eq1.2} and \eqref{eq1.3}.
We keep the same notation
$v(y,r)$ for the volume of a ball with respect to the metric $\theta$ and 
the measure $\mu$.

The bounds \eqref{eq1.2} for a compact Riemanniam manifold are well-known, see,
for example, \cite{ref9*}.  Recall that local consideration of any 
Riemannian manifold shows that at each point $y \in \MMM$ 
and for small $r,\,\, 0\le r <r_{\MMM}$, one has the asymptotic  
$v(y,r)=\kappa_d r^d + O(r^{d-1})$, where $\kappa_d$ is the volume of unit ball 
in $\Rr^d$, see \cite[Section~9.2]{ref9*}. 
This implies the bounds \eqref{eq1.2} for small radii $r$. 
Since $\MMM$ is compact, the bounds
\eqref{eq1.2} can be easily extended to all $0<r\le \diam \MMM$.

In order to prove the bound \eqref{eq1.3}, we compare $v(y,r)$ with the volume
$v_k(r)$ of 
a geodesic ball in the $d$-dimensional simply connected hyperbolic space of constant
negative sectional curvature $-k^2$. The volume $v_k(r)$ is independent of 
the position of its center and is given explicitly by
\begin{equation*}
v_k(r)=\sigma_d \int^r_0 \left(\frac{\sinh ku}{k}\right)^{d-1}\,\dd u,
\quad 0\le r < \infty,
%
\end{equation*}
where $\sigma_d$ is the $(d-1)$-dimensional area of the unit sphere in $\Rr^d$.
\begin{lemma}\label{lem4.1}
For any compact Riemannian manifold $\MMM$, there exists a constant $k_{\MMM}\ge 0$
depending only on $\MMM$, such that for all $k> k_{\MMM}$ the ratio 
$\frac{v(y,r)}{v_k(r)}$ as a function of $r$ is non-increasing and tends to 1 
as $r\to 0$.
\end{lemma}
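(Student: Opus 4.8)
The plan is to recognise Lemma~\ref{lem4.1} as a form of the Bishop--Gromov volume comparison theorem, and to deduce it from the classical Riccati comparison for the mean curvature of geodesic spheres together with an elementary monotonicity lemma for ratios of integrals.

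First I would fix the curvature bound. Since $\MMM$ is compact, its Ricci curvature is bounded below, so there is a constant $k_{\MMM}\ge 0$ with $\mathrm{Ric}_{\MMM}\ge-(d-1)k_{\MMM}^2$; then for every $k>k_{\MMM}$ one has $\mathrm{Ric}_{\MMM}\ge-(d-1)k^2$, which is exactly the Ricci curvature of the model hyperbolic space of constant sectional curvature $-k^2$ that defines $v_k(r)$. This is the only place where compactness and the choice of $k_{\MMM}$ enter.

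Next I would pass to geodesic polar coordinates centred at $y$ and write the Riemannian volume element as $A(r,\vartheta)\,\dd r\,\dd\vartheta$, where $A(r,\vartheta)$ is the area density of the geodesic sphere in the direction $\vartheta\in S^{d-1}$, extended by $A\equiv0$ beyond the cut distance $c(\vartheta)$. The logarithmic derivative $\frac{\partial}{\partial r}\log A(r,\vartheta)$ is the mean curvature of the geodesic sphere and obeys a Riccati differential inequality controlled by $\mathrm{Ric}$. Comparing this with the model density $s_k(r)^{d-1}$, where $s_k(r)=\sinh(kr)/k$, the Sturm--Riccati comparison yields that $r\mapsto A(r,\vartheta)/s_k(r)^{d-1}$ is non-increasing for $r<c(\vartheta)$, and the downward jump to $0$ at the cut distance preserves this monotonicity. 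Integrating over $\vartheta$ shows that the area of the geodesic sphere $a(y,r)=\int_{S^{d-1}}A(r,\vartheta)\,\dd\vartheta$ satisfies that $a(y,r)/s_k(r)^{d-1}$ is non-increasing in $r$. Since $v(y,r)=\int_0^r a(y,t)\,\dd t$ and $v_k(r)=\sigma_d\int_0^r s_k(t)^{d-1}\,\dd t$, I would then invoke the elementary lemma that if $f/g$ is non-increasing with $g>0$ then $r\mapsto\frac{\int_0^r f}{\int_0^r g}$ is also non-increasing, applied to $f=a(y,\cdot)$ and $g=\sigma_d\,s_k^{\,d-1}$, to conclude that $v(y,r)/v_k(r)$ is non-increasing.

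The limit as $r\to0$ then follows because both densities have the same leading behaviour: $A(r,\vartheta)=r^{d-1}+O(r^{d+1})$ and $s_k(r)^{d-1}=r^{d-1}+O(r^{d+1})$, so $a(y,r)\sim\sigma_d r^{d-1}\sim\sigma_d s_k(r)^{d-1}$ and the ratio of the two integrals tends to $1$. I expect the main obstacle to be the careful handling of the cut locus in the pointwise step --- verifying that setting $A\equiv0$ past $c(\vartheta)$ is legitimate and does not destroy the monotonicity --- together with the passage from the directional inequality to the integrated volume inequality via the ratio-of-integrals lemma; the Riccati comparison itself is entirely classical. I would also note that the resulting pointwise bound $a(y,r)\le\sigma_d\,s_k(r)^{d-1}$ for the area of geodesic spheres is precisely what yields the Lipschitz continuity in Condition~B, so this lemma is exactly the tool needed to complete Proposition~\ref{prop1.1}.
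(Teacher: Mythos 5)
Your proposal is correct and takes essentially the same route as the paper: the paper also obtains $k_{\MMM}$ from the compactness-induced lower Ricci bound (the smallest $k_0\ge 0$ such that $R(y)+k_0^2(d-1)I_d$ is non-negative definite for all $y\in\MMM$) and then appeals to the Bishop--Gromov volume comparison theorem, which it cites from \cite{ref9*} and \cite{ref10} rather than proves. Your Riccati-comparison argument in polar coordinates, with the extension of the area density by zero past the cut locus and the ratio-of-integrals lemma, is precisely the standard proof of that cited theorem, so the only difference is that you reprove Bishop--Gromov where the paper simply invokes it.
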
 
Lemma~4.1 is a very special case of the Bishop--Gromov volume comparison theorem,
see \cite[Section~8.7, Theorem~8.45]{ref9*} and  \cite[Chapter~9, Lemma~36]{ref10}. The constant $k_{\MMM}$ is
the smallest $k_0\ge 0$ such that the matrix $R(y)+k^2_0(d-1)I_d$ is
not-negative defined for all $y\in \MMM$, here $R(y)$ is the Ricci tensor at
$y\in \MMM$ and $I_d$ is the identity $d\times d$ matrix.

By Lemma~4.1, for $0< r_1\le r_2\le \diam \MMM$, we have 
\begin{equation*}
\frac{v(y,r_2)}{v_k(r_2)} \le \frac{v(y,r_1)}{v_k(r_1)} \le 1.
\end{equation*}
Therefore,
\begin{equation*}
v(y,r_2)-v_k(r_1) \le \frac{v(y,r_1)}{v_k(r_1)} (v_k(r_2)-v_k(r_1) ) 
\le v_k(r_2)-v_k(r_1),
\end{equation*}
and the bound \eqref{eq1.3} follows, since $v_k(r)$ is smooth and increasing.

The proof of Proposition~1.1 is competed.

\end{document}